\newcommand{\qdn}{\hspace*{-1.5mm}}
\newcommand{\qqdn}{\hspace*{-2.5mm}}
\newcommand{\xqdn}{\hspace*{-5.0mm}}
\newcommand{\xxqdn}{\hspace*{-10mm}}
\newcommand{\ffnk}[4]{\left[\qdn\ba{#1}#3\\#4\ea{\!\Big|\:#2}\right]}
\newcommand{\binm}{\binom}
\newcommand{\nnm}{\nonumber}
\newcommand{\be}{\begin{equation}}
\newcommand{\ee}{\end{equation}}
\newcommand{\ba}{\begin{array}}
\newcommand{\ea}{\end{array}}
\newcommand{\bmn}{\begin{eqnarray}}
\newcommand{\emn}{\end{eqnarray}}
\newcommand{\bnm}{\begin{eqnarray*}}
\newcommand{\enm}{\end{eqnarray*}}
\newcommand{\bln}{\begin{subequations}}
\newcommand{\eln}{\end{subequations}}
\newtheorem{thm}{Theorem}
\newtheorem{lemm}[thm]{Lemma}
\newtheorem{prop}[thm]{Proposition}
\newtheorem{entry}{Entry}
\newcommand{\bbtm}[4]{\bibitem{kn:#1}{#2,}~{#3,}~{#4.}}
\newcommand{\cito}[1]{\cite{kn:#1}}
\newcommand{\citu}[2]{\cite[#2]{kn:#1}}
\begin{document} 
{
\title{Summation formulas involving generalized\\ harmonic numbers}
\author{$^{a,b}$Chuanan Wei$^*$, $^c$Xiaoxia Wang}

\footnote{\emph{2010 Mathematics Subject Classification}: Primary
05A10 and Secondary 33C20.}

\dedicatory{
$^A$Department of Mathematics\\
  Shanghai Normal University, Shanghai 200234, China\\
$^B$Department of Information Technology\\
  Hainan Medical College, Haikou 571199, China\\
$^C$Department of Mathematics\\
  Shanghai University, Shanghai 200444, China}

\thanks{\emph{The Correspondence author$^*$}. \emph{Email addresses}: weichuanan78@163.com (C. Wei),
 xwang913@126. com (X. Wang)}

 \keywords{Hypergeometric series; Derivative operator; Harmonic numbers}

\begin{abstract}
In terms of the derivative operator and three hypergeometric series
identities, several interesting summation formulas involving
generalized harmonic numbers are established.
\end{abstract}

\maketitle\thispagestyle{empty}
\markboth{C. Wei, X. Wang}
         {Generalized harmonic numbers}

\section{Introduction}
For a complex number $x$, define the shifted-factorial to be
\[(x)_{0}=0 \quad\text{and}\quad (x)_{n}
=x(x+1)\cdots(x+n-1)\quad\text{when}\quad n\in\mathbb{N}.\]
 Following Bailey~\cito{bailey}, define the hypergeometric series by
\[_{1+r}F_s\ffnk{cccc}{z}{a_0,&a_1,&\cdots,&a_r}{&b_1,&\cdots,&b_s}
 \:=\:\sum_{k=0}^\infty\frac{(a_0)_k(a_1)_k\cdots(a_r)_k}{(1)_k(b_1)_k\cdots(b_s)_k}z^k,\]
where $\{a_{i}\}_{i\geq0}$ and $\{b_{j}\}_{j\geq1}$ are complex
parameters such that no zero factors appear in the denominators of
the summand on the right hand side. Then Dougall's $_5F_4$-series
identity (cf. \citu{andrews-r}{p. 71}) can be stated as
 \bmn\label{dougall}
 &&_5F_4\ffnk{cccc}{1}{a,1+\frac{a}{2},b,c,d}{\frac{a}{2},1+a-b,1+a-c,1+a-d}
  \nnm\\
&&\:\:=\:\frac{\Gamma(1+a-b)\Gamma(1+a-c)\Gamma(1+a-d)\Gamma(1+a-b-c-d)}
{\Gamma(1+a)\Gamma(1+a-b-c)\Gamma(1+a-b-d)\Gamma(1+a-c-d)},
 \emn
where the parameters satisfy $Re(1+a-b-c-d)>0$ and $\Gamma(x)$ is
the well-known gamma function
\[\Gamma(x)=\int_{0}^{\infty}t^{x-1}e^{-t}dt\:\:\text{with}\:\:Re(x)>0.\]
When $d=a/2$, it reduces to Dixon's $_3F_2
     $-series identity(cf. \citu{andrews-r}{p. 72}):
 \bmn\label{dixon}
 &&\xxqdn\qqdn_3F_2\ffnk{cccc}{1}{a,b,c}{1+a-b,1+a-c}
  \nnm\\
&&\xxqdn\qqdn\:\:=\:\frac{\Gamma(1+\frac{a}{2})\Gamma(1+a-b)\Gamma(1+a-c)\Gamma(1+\frac{a}{2}-b-c)}
{\Gamma(1+a)\Gamma(1+\frac{a}{2}-b)\Gamma(1+\frac{a}{2}-c)\Gamma(1+a-b-c)}
 \emn
provided that $Re(1+\frac{a}{2}-b-c)>0$. A Dixon-like identity that
will appear in Section 3 is \bmn\label{dixon-like}
 &&\xxqdn\qqdn_3F_2\ffnk{cccc}{1}{a,b,c}{1+a-b,a-c}
  \nnm\\\nnm
&&\xxqdn\qqdn\:\:=\:\frac{1}{2^{1+c}}\frac{\Gamma(1+a-b)\Gamma(\frac{1+a}{2}-b-c)\Gamma(\frac{a-c}{2})\Gamma(\frac{1+a-c}{2})}
{\Gamma(1+a-b-c)\Gamma(\frac{a}{2})\Gamma(\frac{1+a}{2}-b)\Gamma(\frac{1+a}{2}-c)}
\\
&&\xxqdn\qqdn\:\:+\:\,\frac{1}{2^{1+c}}\frac{\Gamma(1+a-b)\Gamma(\frac{2+a}{2}-b-c)\Gamma(\frac{a-c}{2})\Gamma(\frac{1+a-c}{2})}
{\Gamma(1+a-b-c)\Gamma(\frac{1+a}{2})\Gamma(\frac{2+a}{2}-b)\Gamma(\frac{a}{2}-c)}
 \emn
provided that $Re(\frac{1+a}{2}-b-c)>0$.

For a complex number $x$ and a positive integer $\ell$, define
generalized harmonic numbers of $\ell$-order to be
\[H_{0}^{\langle \ell\rangle}(x)=0
\quad\text{and}\quad
 H_{n}^{\langle\ell\rangle}(x)=\sum_{k=1}^n\frac{1}{(x+k)^{\ell}}
 \quad\text{with}\quad n\in\mathbb{N}.\]
When $x=0$, they become harmonic numbers of $\ell$-order
\[H_{0}^{\langle \ell\rangle}=0
\quad\text{and}\quad
  H_{n}^{\langle \ell\rangle}
  =\sum_{k=1}^n\frac{1}{k^{\ell}} \quad\text{with}\quad n\in\mathbb{N}.\]
  Setting $\ell=1$ in $H_{0}^{\langle \ell\rangle}(x)$ and $H_{n}^{\langle \ell\rangle}(x)$, we obtain
generalized harmonic numbers
\[H_{0}(x)=0
\quad\text{and}\quad H_{n}(x)
  =\sum_{k=1}^n\frac{1}{x+k} \quad\text{with}\quad n\in\mathbb{N}.\]
When $x=0$, they reduce to classical harmonic numbers
\[H_{0}=0\quad\text{and}\quad
H_{n}=\sum_{k=1}^n\frac{1}{k} \quad\text{with}\quad
n\in\mathbb{N}.\]
 For a differentiable function $f(x)$, define the derivative operator
$\mathcal{D}_x$ by
 \bnm
&&\mathcal{D}_xf(x)=\frac{d}{dx}f(x).
 \enm
 Then it is not difficult to show that
$$\mathcal{D}_x\:\binm{x+s}{t}=\binm{x+s}{t}\big\{H_s(x)-H_{s-t}(x)\big\},$$
$$\mathcal{D}_x\:H_{n}^{\langle \ell\rangle}(x) =-\ell
H_{n}^{\langle\ell+1\rangle}(x),$$ where $s,t\in\mathbb{N}_0$ with
$t\leq s$.

As pointed out by Richard Askey (cf. \cito{andrews}), expressing
harmonic numbers in accordance with differentiation of binomial
coefficients can be traced back to Issac Newton.
 In 2003, Paule and Schneider
\cito{paule} computed the family of series:
 \bnm
W_n(\alpha)=\sum_{k=0}^n\binm{n}{k}^{\alpha}\{1+\alpha(n-2k)H_k\}
 \enm
with $\alpha=1,2,3,4,5$ by combining this way with Zeilberger's
algorithm for definite hypergeometric sums. According to the
derivative operator and the hypergeometric form of Andrews'
$q$-series transformation, Krattenthaler and Rivoal
\cito{krattenthaler} deduced general Paule-Schneider type identities
with $\alpha$ being a positive integer.  More results from
differentiation of binomial coefficients can be found in the papers
\cite{kn:sofo-a,kn:wang-b,kn:wei-a,kn:wei-b}. For different ways and
related harmonic number identities, the reader may refer to
\cite{kn:chen,kn:chyzak,kn:graham,kn:kronenburg-a,
kn:kronenburg-b,kn:schneider,kn:sofo-b,kn:wang-a}. It should be
mentioned that Sun \cito{sun} showed recently some congruence
relations concerning harmonic numbers to us.

Inspired by the work just mentioned, we shall establish several
interesting summation formulas involving generalized harmonic
numbers by means of the derivative operator and
\eqref{dougall}-\eqref{dixon-like}. By specifying the parameters,
they can give numerous harmonic number identities. For making the
reader have a taste, we select, above all, the following two ones:
 \bnm
 &&\sum_{k=0}^{2n}\frac{(-1)^k}{\binm{2n}{k}}H_{k}^{\langle2\rangle}
=\frac{1+2n}{2+2n}H_{n}^{\langle2\rangle},\\
 &&\sum_{k=0}^{2n}\frac{(-1)^k}{\binm{2n}{k}}H_{k}^2
=\frac{1+2n}{2+2n}\bigg\{H_{1+2n}^2-\frac{H_{1+2n}}{1+n}
 -H_{1+2n}^{\langle2\rangle}+H_{1+n}^{\langle2\rangle}\bigg\},
 \enm
where the first equation comes from the case $p=0$ of
\eqref{harmonic-bb} and the second equation is exactly Proposition
\ref{prop-a}.
\section{Dixon's identity, reversal techniques and summation formulas \\involving generalized harmonic numbers}
\begin{thm} \label{thm-a}
Let $x$ and $y$ be complex numbers. Then
 \bnm
 &&\xxqdn\sum_{k=0}^{2n}(-1)^k\binm{2n}{k}\frac{\binm{x+k}{k}\binm{y+k}{k}}{\binm{x+2n}{k}\binm{y+2n}{k}}H_{k}(x)\\
&&\xxqdn\:\:=\:\frac{1}{2}\frac{\binm{x+n}{n}\binm{y+n}{n}\binm{1+x+y+2n}{2n}}{\binm{x+2n}{2n}\binm{y+2n}{2n}\binm{1+x+y+n}{n}}
\big\{H_n(x)-H_n(1+x+y)+H_{2n}(1+x+y)\big\}.
 \enm
\end{thm}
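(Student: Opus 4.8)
The plan is to read off the harmonic-number-free companion sum as a terminating Dixon series and then to manufacture the harmonic numbers by differentiating in $x$. Abbreviate the summand by
\[
T_k=(-1)^k\binm{2n}{k}\frac{\binm{x+k}{k}\binm{y+k}{k}}{\binm{x+2n}{k}\binm{y+2n}{k}},
\]
so that the asserted left-hand side is $A:=\sum_{k=0}^{2n}T_kH_k(x)$. Expressing each binomial through Pochhammer symbols turns the companion sum $S:=\sum_{k=0}^{2n}T_k$ into
\[
S={}_3F_2\ffnk{cccc}{1}{-2n,\,1+x,\,1+y}{-2n-x,\,-2n-y},
\]
which is exactly Dixon's series \eqref{dixon} with $a=-2n$, $b=1+x$, $c=1+y$. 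I would evaluate it and rewrite the resulting $\Gamma$-quotient as the rational closed form
\[
S=C:=\frac{\binm{x+n}{n}\binm{y+n}{n}\binm{1+x+y+2n}{2n}}{\binm{x+2n}{2n}\binm{y+2n}{2n}\binm{1+x+y+n}{n}}.
\]

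The decisive structural observation is that $T_k$ is invariant under the reversal $k\mapsto 2n-k$: the factor $(-1)^k\binm{2n}{k}$ is symmetric, and each quotient $\binm{x+k}{k}\big/\binm{x+2n}{k}=(x+k)!\,(x+2n-k)!\big/\big(x!\,(x+2n)!\big)$ (and its $y$-analogue) is visibly symmetric as well. Now apply $\mathcal{D}_x$ to the identity $S=C$. Differentiating $S$ termwise, the paper's rules $\mathcal{D}_x\binm{x+k}{k}=\binm{x+k}{k}H_k(x)$ and $\mathcal{D}_x\binm{x+2n}{k}=\binm{x+2n}{k}\{H_{2n}(x)-H_{2n-k}(x)\}$ give $\mathcal{D}_xT_k=T_k\{H_k(x)-H_{2n}(x)+H_{2n-k}(x)\}$. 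Summing over $k$ and using the reversal to replace $\sum_kT_kH_{2n-k}(x)$ by $\sum_kT_kH_k(x)=A$, I obtain
\[
\mathcal{D}_xS=2A-H_{2n}(x)\,S,\qquad\text{so}\qquad A=\frac{1}{2}\big[\mathcal{D}_xS+H_{2n}(x)\,S\big].
\]

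It then remains to differentiate the closed form. Logarithmic differentiation of $C$, evaluating each factor directly — for instance $\mathcal{D}_x\binm{1+x+y+2n}{2n}=\binm{1+x+y+2n}{2n}H_{2n}(1+x+y)$, since its numerator is $\prod_{i=1}^{2n}(x+1+y+i)$, and similarly for the remaining blocks — yields
\[
\mathcal{D}_xC=C\big\{H_n(x)-H_{2n}(x)+H_{2n}(1+x+y)-H_n(1+x+y)\big\}.
\]
Substituting $S=C$ and this into $A=\frac{1}{2}[\mathcal{D}_xC+H_{2n}(x)C]$, the two $H_{2n}(x)$ contributions cancel and leave $A=\frac{1}{2}C\{H_n(x)-H_n(1+x+y)+H_{2n}(1+x+y)\}$, which is precisely the stated right-hand side.

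I expect the only real obstacle to be the clean evaluation $S=C$: since $a=-2n$ produces $\Gamma$-quotients such as $\Gamma(1-n)/\Gamma(1-2n)$ at non-positive integers, the $\Gamma$-form of Dixon's theorem must be taken as a limit and converted, via the reflection and duplication formulas, into the binomial product $C$. Once $S=C$ is secured as an identity of rational functions in $x$ and $y$, the reversal symmetry and the two logarithmic differentiations are routine; a spot check at $n=1$, where both $S$ and $C$ equal $2(x+y+3)/\big[(x+2)(y+2)\big]$, confirms the evaluation and pins down all constants.
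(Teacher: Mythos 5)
Your proposal is correct and takes essentially the same route as the paper: specialize Dixon's $_3F_2$ identity at $a=-2n$, $b=1+x$, $c=1+y$ to obtain the harmonic-number-free companion identity, apply $\mathcal{D}_x$ to both sides, and use the reversal symmetry $k\mapsto 2n-k$ to identify $\sum_k T_k H_{2n-k}(x)$ with the target sum. The only cosmetic difference is that you track the $H_{2n}(x)$ terms and their cancellation explicitly, whereas the paper writes the differentiated identity with those contributions already cancelled on both sides.
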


\begin{proof}
 Perform the replacements
$a\to-2n$, $b\to 1+x$, $c\to 1+y$ in \eqref{dixon} to get
 \bmn\label{equation-a}
 \xqdn\qqdn\sum_{k=0}^{2n}(-1)^k\binm{2n}{k}\frac{\binm{x+k}{k}\binm{y+k}{k}}{\binm{x+2n}{k}\binm{y+2n}{k}}
=\frac{\binm{x+n}{n}\binm{y+n}{n}\binm{1+x+y+2n}{2n}}{\binm{x+2n}{2n}\binm{y+2n}{2n}\binm{1+x+y+n}{n}}.
 \emn
 Applying the derivative operator $\mathcal{D}_x$ to both sides of
 \eqref{equation-a}, we gain
\bnm
 &&\xxqdn\qqdn\sum_{k=0}^{2n}(-1)^k\binm{2n}{k}\frac{\binm{x+k}{k}\binm{y+k}{k}}{\binm{x+2n}{k}\binm{y+2n}{k}}\{H_{k}(x)+H_{2n-k}(x)\}\\
&&\xxqdn\qqdn\:\:=\:\frac{\binm{x+n}{n}\binm{y+n}{n}\binm{1+x+y+2n}{2n}}{\binm{x+2n}{2n}\binm{y+2n}{2n}\binm{1+x+y+n}{n}}
\big\{H_n(x)-H_n(1+x+y)+H_{2n}(1+x+y)\big\}.
 \enm
In terms of the reversal techniques, it is not difficult to show
that
 \bnm
\qquad\:\:\sum_{k=0}^{2n}(-1)^k\binm{2n}{k}\frac{\binm{x+k}{k}\binm{y+k}{k}}{\binm{x+2n}{k}\binm{y+2n}{k}}H_{k}(x)
=\sum_{k=0}^{2n}(-1)^k\binm{2n}{k}\frac{\binm{x+k}{k}\binm{y+k}{k}}{\binm{x+2n}{k}\binm{y+2n}{k}}H_{2n-k}(x).
\enm
 Therefore, we derive Theorem \ref{thm-a} to complete the proof.
\end{proof}

Taking $x=p$, $y=q$ with $p,q\in \mathbb{N}_0$ in Theorem
\ref{thm-a} and utilizing \eqref{equation-a}, we achieve the
summation formula on harmonic numbers:
 \bmn\label{harmonic-aa}
 &&\sum_{k=0}^{2n}(-1)^k\binm{2n}{k}\frac{\binm{p+k}{k}\binm{q+k}{k}}{\binm{p+2n}{k}\binm{q+2n}{k}}H_{p+k}
 \nnm\\
&&\:\:=\:\frac{1}{2}\frac{\binm{p+n}{n}\binm{q+n}{n}\binm{1+p+q+2n}{2n}}{\binm{p+2n}{2n}\binm{q+2n}{2n}\binm{1+p+q+n}{n}}
\big\{H_{p}+H_{p+n}-H_{1+p+q+n}+H_{1+p+q+2n}\big\}.
 \emn

\begin{thm} \label{thm-b}
Let $x$ be a complex number. Then
 \bnm
 \sum_{k=0}^{2n}(-1)^k\binm{2n}{k}\frac{\binm{x+k}{k}^2}{\binm{x+2n}{k}^2}H_{k}^{\langle2\rangle}(x)
=\frac{1}{2}\frac{\binm{x+n}{n}^2\binm{1+2x+2n}{2n}}{\binm{x+2n}{2n}^2\binm{1+2x+n}{n}}H_{n}^{\langle2\rangle}(x).
 \enm
\end{thm}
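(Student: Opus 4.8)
The plan is to imitate the proof of Theorem~\ref{thm-a}, but to apply a \emph{second-order} differential operator to the two-variable seed \eqref{equation-a} so that the first-order harmonic numbers produced there are upgraded to second-order ones. Abbreviate $u_k(x)=\binm{x+k}{k}/\binm{x+2n}{k}$, and write \eqref{equation-a} as $\Phi(x,y)=\Psi(x,y)$, where $\Phi(x,y)=\sum_{k=0}^{2n}(-1)^k\binm{2n}{k}u_k(x)u_k(y)$ and $\Psi(x,y)$ is its right-hand side. The crucial idea is to apply the operator $\mathcal{D}_x^2-\mathcal{D}_x\mathcal{D}_y$ and then set $y=x$: on the summand this combination cancels precisely the contributions that are quadratic in first-order harmonic numbers, leaving only the desired second-order ones, and on the closed form it forces the analogous cancellation.

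On the sum side I would use $\mathcal{D}_x\log u_k(x)=H_k(x)+H_{2n-k}(x)-H_{2n}(x)=:\ell_k(x)$, which follows at once from the two differentiation rules of \S1, together with $\mathcal{D}_x\ell_k(x)=-H_k^{\langle2\rangle}(x)-H_{2n-k}^{\langle2\rangle}(x)+H_{2n}^{\langle2\rangle}(x)=:\ell_k'(x)$. Since $u_k'(x)=u_k\ell_k$ and $u_k''(x)=u_k(\ell_k^2+\ell_k')$, and since $u_k(y)$ is constant for $\mathcal{D}_x$, one finds $\mathcal{D}_x^2[u_k(x)u_k(y)]\big|_{y=x}=u_k^2(\ell_k^2+\ell_k')$ while $\mathcal{D}_x\mathcal{D}_y[u_k(x)u_k(y)]\big|_{y=x}=u_k^2\ell_k^2$. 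Subtracting annihilates $\ell_k^2$ and leaves
\[\big(\mathcal{D}_x^2-\mathcal{D}_x\mathcal{D}_y\big)\Phi\big|_{y=x}=\sum_{k=0}^{2n}(-1)^k\binm{2n}{k}\frac{\binm{x+k}{k}^2}{\binm{x+2n}{k}^2}\big\{H_{2n}^{\langle2\rangle}(x)-H_k^{\langle2\rangle}(x)-H_{2n-k}^{\langle2\rangle}(x)\big\}.\]

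On the closed-form side I would factor $\Psi(x,y)=P(x)P(y)Q(x+y)$ with $P(x)=\binm{x+n}{n}/\binm{x+2n}{2n}$ and $Q(s)=\binm{1+s+2n}{2n}/\binm{1+s+n}{n}$; note that $G(x):=Q(2x)P(x)^2$ is exactly the binomial quotient multiplying $\frac12 H_n^{\langle2\rangle}(x)$ on the right of Theorem~\ref{thm-b}. A direct computation shows that every term carrying a factor $Q'$ or $Q''$ occurs identically in $\mathcal{D}_x^2\Psi|_{y=x}$ and $\mathcal{D}_x\mathcal{D}_y\Psi|_{y=x}$, hence cancels in the difference, which collapses to the Wronskian-type expression $Q(2x)\{P''(x)P(x)-P'(x)^2\}=G(x)\,(\log P)''(x)$. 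Since $(\log P)'(x)=H_n(x)-H_{2n}(x)$ gives $(\log P)''(x)=H_{2n}^{\langle2\rangle}(x)-H_n^{\langle2\rangle}(x)$, the right-hand side equals $G(x)\{H_{2n}^{\langle2\rangle}(x)-H_n^{\langle2\rangle}(x)\}$.

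To conclude I equate the two sides. On the sum side the $H_{2n}^{\langle2\rangle}(x)$ term factors out as $H_{2n}^{\langle2\rangle}(x)\,\Phi(x,x)=H_{2n}^{\langle2\rangle}(x)\,G(x)$ by the seed identity, cancelling the matching term on the right. The remaining contribution $-H_k^{\langle2\rangle}(x)-H_{2n-k}^{\langle2\rangle}(x)$ is then symmetrized through the reversal relation $u_{2n-k}(x)=u_k(x)$ (proved exactly as in Theorem~\ref{thm-a}), giving $\sum(-1)^k\binm{2n}{k}u_k^2H_{2n-k}^{\langle2\rangle}(x)=\sum(-1)^k\binm{2n}{k}u_k^2H_k^{\langle2\rangle}(x)$, so the two copies merge; dividing by $2$ then yields precisely Theorem~\ref{thm-b}. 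I expect the main obstacle to be recognizing that $\mathcal{D}_x^2-\mathcal{D}_x\mathcal{D}_y$ is the correct operator, namely the unique second-order combination that simultaneously annihilates the quadratic first-order harmonic terms on the sum side and the $Q'$, $Q''$ terms on the closed-form side. Once this is identified, what remains is routine logarithmic-derivative bookkeeping together with the reversal symmetry already exploited for Theorem~\ref{thm-a}.
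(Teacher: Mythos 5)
Your proof is correct and is essentially the paper's own argument: the paper derives Theorem \ref{thm-b} by applying $\mathcal{D}_y$ and $\mathcal{D}_x$ separately to Theorem \ref{thm-a} (which is itself the symmetrized first $x$-derivative of the Dixon seed \eqref{equation-a}), setting $y=x$ in both, and subtracting — which implements precisely your operator $\mathcal{D}_x\mathcal{D}_y-\mathcal{D}_x^2$ on the same seed, with the same reversal symmetry $u_{2n-k}(x)=u_k(x)$ doing the final merging. Your only real departure is organizational but pleasant: by applying $\mathcal{D}_x^2-\mathcal{D}_x\mathcal{D}_y$ directly to \eqref{equation-a} and observing the structural Wronskian cancellation $P''P-(P')^2=P^2(\log P)''$ on the closed-form side, you bypass the explicit intermediate right-hand sides (the terms in $H_n(1+2x)$, $H_{2n}^{\langle2\rangle}(1+2x)$ and the squared bracket appearing in \eqref{equation-b}) that the paper writes out in full and then cancels by subtraction.
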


\begin{proof}
Applying the derivative operator $\mathcal{D}_y$ to Theorem
\ref{thm-a} and then fixing $y=x$, we
 attain
\bmn\label{equation-b}
 &&\xxqdn\sum_{k=0}^{2n}(-1)^k\binm{2n}{k}\frac{\binm{x+k}{k}^2}{\binm{x+2n}{k}^2}
\{H_{k}^2(x)+H_{k}(x)H_{2n-k}(x)\}=\frac{1}{2}\frac{\binm{x+n}{n}^2\binm{1+2x+2n}{2n}}{\binm{x+2n}{2n}^2\binm{1+2x+n}{n}}
 \nnm\\
&&\xxqdn\:\:\times\:
\big\{H_{n}^{\langle2\rangle}(1+2x)-H_{2n}^{\langle2\rangle}(1+2x)+[H_n(1+2x)-H_{2n}(1+2x)-H_n(x)]^2\big\}.
 \emn
 Applying the derivative operator $\mathcal{D}_x$ to Theorem \ref{thm-a} and then setting $y=x$, we
obtain
 \bnm
 &&\xxqdn\sum_{k=0}^{2n}(-1)^k\binm{2n}{k}\frac{\binm{x+k}{k}^2}{\binm{x+2n}{k}^2}
\{H_{k}^2(x)+H_{k}(x)H_{2n-k}(x)-H_{k}^{\langle2\rangle}(x)\}\\
&&\xxqdn\:\:=\:\frac{1}{2}\frac{\binm{x+n}{n}^2\binm{1+2x+2n}{2n}}{\binm{x+2n}{2n}^2\binm{1+2x+n}{n}}
\big\{H_{n}^{\langle2\rangle}(1+2x)-H_{2n}^{\langle2\rangle}(1+2x)-H_{n}^{\langle2\rangle}(x)
\\&&\qquad\qquad\qquad\quad+[H_n(1+2x)-H_{2n}(1+2x)-H_n(x)]^2\big\}.
 \enm
The difference of \eqref{equation-b} and the last equation produces
Theorem \ref{thm-b}.
\end{proof}

Taking $x=p$ with $p\in \mathbb{N}_0$ in Theorem \ref{thm-b} and
using \eqref{equation-a}, we get the summation formula on harmonic
numbers:
 \bmn\label{harmonic-bb}
\qquad
\sum_{k=0}^{2n}(-1)^k\binm{2n}{k}\frac{\binm{p+k}{k}^2}{\binm{p+2n}{k}^2}H_{p+k}^{\langle2\rangle}
=\frac{1}{2}\frac{\binm{p+n}{n}^2\binm{1+2p+2n}{2n}}{\binm{p+2n}{2n}^2\binm{1+2p+n}{n}}
\big\{H_{p+n}^{\langle2\rangle}+H_{p}^{\langle2\rangle}\big\}.
 \emn

\begin{prop}[Harmonic number identity]\label{prop-a}
 \bnm
 \sum_{k=0}^{2n}\frac{(-1)^k}{\binm{2n}{k}}H_{k}^2
=\frac{1+2n}{2+2n}\bigg\{H_{1+2n}^2-\frac{H_{1+2n}}{1+n}
 -H_{1+2n}^{\langle2\rangle}+H_{1+n}^{\langle2\rangle}\bigg\}.
 \enm
\end{prop}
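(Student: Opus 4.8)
The plan is to isolate the squared harmonic numbers as the second $x$-derivative of a single polynomial, thereby reducing the claim to the already-summed second-order harmonic numbers in \eqref{harmonic-bb} plus one explicit evaluation. Introduce
\[F(x)=\sum_{k=0}^{2n}\frac{(-1)^k}{\binom{2n}{k}}\binom{x+k}{k}.\]
Since the factor $1/\binom{2n}{k}$ is free of $x$, the derivative rules recorded in the introduction give $\mathcal{D}_x\binom{x+k}{k}=\binom{x+k}{k}H_k(x)$ and hence $\mathcal{D}_x^2\binom{x+k}{k}\big|_{x=0}=H_k^2-H_k^{\langle2\rangle}$. Applying $\mathcal{D}_x^2$ to $F$ and setting $x=0$ therefore yields
\[\sum_{k=0}^{2n}\frac{(-1)^k}{\binom{2n}{k}}H_k^2=F''(0)+\sum_{k=0}^{2n}\frac{(-1)^k}{\binom{2n}{k}}H_k^{\langle2\rangle}.\]
The last sum is precisely the $p=0$ instance of \eqref{harmonic-bb}, equal to $\tfrac{1+2n}{2+2n}H_n^{\langle2\rangle}$, so everything hinges on computing $F''(0)$.

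To evaluate $F$ in closed form I would note that the summand equals the rising-factorial quotient $\tfrac{(x+1)_k}{(-2n)_k}$, a hypergeometric term to which Gosper's algorithm applies. One checks directly (clear denominators and use $(-2n)_k=(-2n)_{k-1}(k-2n-1)$) that
\[\frac{(x+1)_k}{(-2n)_k}=T_{k+1}-T_k,\qquad T_k=\frac{(x+1)_k}{(x+2n+2)\,(-2n)_{k-1}},\]
so the sum telescopes to $T_{2n+1}-T_0$. Evaluating the two boundary terms (using $(-2n)_{2n}=(2n)!$) collapses this to
\[F(x)=\frac{2n+1}{x+2n+2}\left\{\binom{x+2n+1}{2n+1}+1\right\}.\]

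It then remains to differentiate this closed form twice at $x=0$. With $B(x)=\binom{x+2n+1}{2n+1}$ the logarithmic derivative gives $B(0)=1$, $B'(0)=H_{1+2n}$ and $B''(0)=H_{1+2n}^2-H_{1+2n}^{\langle2\rangle}$, and the quotient rule produces
\[F''(0)=\frac{1+2n}{2+2n}\left\{H_{1+2n}^2-H_{1+2n}^{\langle2\rangle}-\frac{H_{1+2n}}{1+n}+\frac{1}{(1+n)^2}\right\}.\]
Adding $\tfrac{1+2n}{2+2n}H_n^{\langle2\rangle}$ and absorbing $\tfrac{1}{(1+n)^2}+H_n^{\langle2\rangle}=H_{1+n}^{\langle2\rangle}$ gives exactly the asserted right-hand side.

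The main obstacle is the closed form for $F(x)$: unlike the sums in the body of Section 2, it carries the \emph{constant} denominator $\binom{2n}{k}$ rather than $\binom{x+2n}{k}$, so it cannot be produced from \eqref{equation-a} by specialising parameters and must be summed on its own, by the telescoping above. One should also keep in mind that $F(x)$ is a terminating sum, not a convergent ${}_2F_1(x+1,1;-2n;1)$, so Gauss's theorem is unavailable and the explicit antidifference is essential. Once $F$ is in hand, the two differentiations, the appeal to \eqref{harmonic-bb}, and the final simplification are entirely routine.
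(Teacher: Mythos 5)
Your proof is correct, but it follows a genuinely different route from the paper's. The paper never evaluates a new sum: it takes the case $x=0$ of its identity \eqref{equation-b}, which gives $\sum_{k=0}^{2n}\frac{(-1)^k}{\binom{2n}{k}}\{H_k^2+H_kH_{2n-k}\}$ in closed form, and then subtracts the value of the cross sum $\sum_{k=0}^{2n}\frac{(-1)^k}{\binom{2n}{k}}H_kH_{2n-k}$, which it quotes from an external source (the case $p=n$ of Wei, Gong and Yan \citu{wei-a}{Corollary 21}). You instead eliminate the cross term altogether: your $F''(0)$ produces $\sum_{k=0}^{2n}\frac{(-1)^k}{\binom{2n}{k}}\{H_k^2-H_k^{\langle2\rangle}\}$, and the correction term is the $p=0$ case of \eqref{harmonic-bb}, which is internal to the paper. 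The price is that you must sum $F(x)=\sum_{k=0}^{2n}\frac{(x+1)_k}{(-2n)_k}$ yourself; your telescoping does this correctly (the difference $T_{k+1}-T_k$ collapses because $(x+1+k)-(k-1-2n)=x+2n+2$, and the closed form reproduces the classical value $\frac{2n+1}{n+1}$ at $x=0$), and the differentiation of the closed form together with the final absorption $\frac{1}{(1+n)^2}+H_n^{\langle2\rangle}=H_{1+n}^{\langle2\rangle}$ checks out exactly. What your route buys is self-containedness---no appeal to outside literature---at the cost of one new (but elementary) closed-form evaluation; what the paper's route buys is that no new summation is required, every ingredient being either already derived in the paper or citable. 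One cosmetic point: your $T_k$ involves $(-2n)_{k-1}$ at $k=0$, i.e.\ $(-2n)_{-1}$, which needs the convention $(a)_{-1}=1/(a-1)$; it is cleaner to write $T_k=\frac{(x+1)_k\,(k-1-2n)}{(x+2n+2)\,(-2n)_k}$, which is manifestly defined for $0\le k\le 2n+1$ and makes $T_0=-\frac{2n+1}{x+2n+2}$ immediate.
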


\begin{proof}
The case $x=0$ of \eqref{equation-b} reads as
 \bnm
\:\qquad\sum_{k=0}^{2n}\frac{(-1)^k}{\binm{2n}{k}}\big\{H_{k}^2+H_kH_{2n-k}\big\}
=\frac{1+2n}{2+2n}\bigg\{\bigg(H_{1+2n}-\frac{1}{1+n}\bigg)^2
 -H_{1+2n}^{\langle2\rangle}+H_{1+n}^{\langle2\rangle}\bigg\}.
 \enm
The case $p=n$ of Wei, Gong and Yan \citu{wei-a}{Corollary 21} is
 \bnm
\sum_{k=0}^{2n}\frac{(-1)^k}{\binm{2n}{k}}H_kH_{2n-k}
=\frac{1+2n}{2(1+n)^2}\bigg\{\frac{1}{1+n}-H_{1+2n}\bigg\}.
 \enm
The difference of the last two equations offers Proposition
\ref{prop-a}.
\end{proof}

\section{Dixon-like identity, bisection method and summation formulas \\involving generalized harmonic numbers}
\begin{lemm}[Dixon-like identity]\label{lemm}
\bnm
 &&\xxqdn\qqdn_3F_2\ffnk{cccc}{1}{a,b,c}{1+a-b,a-c}
  \nnm\\\nnm
&&\xxqdn\qqdn\:\:=\:\frac{1}{2^{1+c}}\frac{\Gamma(1+a-b)\Gamma(\frac{1+a}{2}-b-c)\Gamma(\frac{a-c}{2})\Gamma(\frac{1+a-c}{2})}
{\Gamma(1+a-b-c)\Gamma(\frac{a}{2})\Gamma(\frac{1+a}{2}-b)\Gamma(\frac{1+a}{2}-c)}
\\
&&\xxqdn\qqdn\:\:+\:\,\frac{1}{2^{1+c}}\frac{\Gamma(1+a-b)\Gamma(\frac{2+a}{2}-b-c)\Gamma(\frac{a-c}{2})\Gamma(\frac{1+a-c}{2})}
{\Gamma(1+a-b-c)\Gamma(\frac{1+a}{2})\Gamma(\frac{2+a}{2}-b)\Gamma(\frac{a}{2}-c)}
 \enm
provided that $Re(\frac{1+a}{2}-b-c)>0$.
\end{lemm}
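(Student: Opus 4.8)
The plan is to reduce the given series to the two classical evaluations already recorded in the excerpt, namely Dixon's ${}_3F_2$-sum \eqref{dixon} and Dougall's ${}_5F_4$-sum \eqref{dougall}. Writing the left-hand side as $S=\sum_{k\ge0}\frac{(a)_k(b)_k(c)_k}{(1)_k(1+a-b)_k(a-c)_k}$, the only obstruction to well-poisedness is the lower parameter $a-c$, since the pair $(c,a-c)$ sums to $a$ rather than to $1+a$. First I would remove this obstruction by the elementary identity $\frac{1}{(a-c)_k}=\frac{a-c+k}{(a-c)\,(1+a-c)_k}$, which turns the summand into Dixon's well-poised summand $D_k=\frac{(a)_k(b)_k(c)_k}{(1)_k(1+a-b)_k(1+a-c)_k}$ carrying one extra linear factor, so that $S=\frac{1}{a-c}\sum_{k\ge0}D_k\,(a-c+k)$.

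The key step (the bisection) is to split this linear factor symmetrically as $a-c+k=\frac12(a+2k)+\frac12(a-2c)$. The constant half reproduces Dixon's series itself and contributes $\frac{a-2c}{2(a-c)}\,D$, with $D$ given by \eqref{dixon}. In the remaining half I would use $\frac{a+2k}{a}=\frac{(1+\frac a2)_k}{(\frac a2)_k}$ to insert the very-well-poised pair $(1+\frac a2,\frac a2)$ on top of Dixon's summand; this upgrades the ${}_3F_2$ to the very-well-poised series $E={}_4F_3[a,1+\frac a2,b,c;\frac a2,1+a-b,1+a-c;1]$, yielding $S=\frac{a}{2(a-c)}E+\frac{a-2c}{2(a-c)}D$.

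To evaluate $E$ I would recognize it as Dougall's ${}_5F_4$-identity \eqref{dougall} specialized to $d=\frac{1+a}{2}$: then $1+a-d=\frac{1+a}{2}=d$, the pair $(d,1+a-d)$ cancels, and the ${}_5F_4$ collapses to exactly $E$. The convergence requirement $Re(1+a-b-c-d)>0$ of \eqref{dougall} becomes precisely the hypothesis $Re(\frac{1+a}{2}-b-c)>0$, which simultaneously secures convergence of the Dixon piece and of $S$ itself. This produces closed gamma-quotient forms for both $E$ and $D$.

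The final step is purely computational: substitute the two gamma quotients into $S=\frac{a}{2(a-c)}E+\frac{a-2c}{2(a-c)}D$ and simplify. Using $\Gamma(1+z)=z\Gamma(z)$ to absorb the rational prefactors (in particular $\frac a2-c=\frac{a-2c}{2}$ cancels the factor $a-2c$) together with the Legendre duplication formula $\Gamma(z)\Gamma(z+\frac12)=2^{1-2z}\sqrt{\pi}\,\Gamma(2z)$ to convert $\Gamma(1+a)$, $\Gamma(1+a-c)$ and $\Gamma(a-c)$ into the half-argument gammas, the $E$-term matches the first summand of the Lemma and the $D$-term matches the second. The main obstacle is spotting the very-well-poised reduction of $E$ to \eqref{dougall} at $d=\frac{1+a}{2}$; once that specialization is in hand, everything else is routine gamma bookkeeping.
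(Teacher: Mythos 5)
Your proof is correct, and I verified the two key computations: the decomposition $S=\tfrac{a}{2(a-c)}E+\tfrac{a-2c}{2(a-c)}D$ follows exactly as you say from $1/(a-c)_k=(a-c+k)/\big((a-c)(1+a-c)_k\big)$ and the split $a-c+k=\tfrac12(a+2k)+\tfrac12(a-2c)$, and after inserting Dougall's value of $E$ (at $d=\tfrac{1+a}{2}$, where the pair $(d,1+a-d)$ indeed cancels) and Dixon's value of $D$, the duplication formula applied to $\Gamma(1+a)$ and $\Gamma(1+a-c)$ together with $\Gamma(1+\tfrac{a}{2}-c)=\tfrac{a-2c}{2}\Gamma(\tfrac{a}{2}-c)$ reproduces precisely the two summands of Lemma \ref{lemm}, with all convergence conditions covered by $Re(\tfrac{1+a}{2}-b-c)>0$. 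However, your route is genuinely different from the paper's. The paper never touches \eqref{dixon} or \eqref{dougall} here: it starts from Whipple's evaluation of ${}_3F_2[a,1-a,b;c,1+2b-c;1]$, averages the two instances obtained under $a\to a$ and $a\to 1+a$ to evaluate the nearly-poised series ${}_3F_2[a,-a,b;c,1+2b-c;1]$ (this averaging is where the two-term right-hand side originates), and then applies Kummer's transformation ${}_3F_2[a,b,c;d,e;1]=\frac{\Gamma(e)\Gamma(d+e-a-b-c)}{\Gamma(e-a)\Gamma(d+e-b-c)}\,{}_3F_2[a,d-b,d-c;d,d+e-b-c;1]$ to map the target series onto that form. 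So the paper performs its two-term split at the level of closed forms, while you perform it at the level of the series via a contiguous relation. What your approach buys: it is self-contained relative to the paper, deriving the Dixon-like identity solely from the two identities \eqref{dougall} and \eqref{dixon} already quoted in the introduction, and it makes the hypothesis $Re(\tfrac{1+a}{2}-b-c)>0$ appear naturally as Dougall's convergence condition; it also exhibits the lemma as a statement about defect-of-well-poisedness, which fits the structure of the paper nicely. What the paper's approach buys: it works entirely within ${}_3F_2$ series (no ascent to a ${}_4F_3$/${}_5F_4$), leaning instead on two standard imported facts (Whipple and Kummer); the final gamma bookkeeping, including a use of the duplication formula, is of comparable weight in both arguments.
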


\begin{proof}
Recall Whipple's $_3F_2$-series identity (cf. \citu{andrews-r}{p.
149}):
 \bnm
 \:\:_3F_2\ffnk{cccc}{1}{a,1-a,b}{c,1+2b-c}
=\frac{\pi 2^{1-2b}\Gamma(c)\Gamma(1+2b-c)}
{\Gamma(\frac{a+c}{2})\Gamma(\frac{1+a-c}{2}+b)\Gamma(\frac{1-a+c}{2})\Gamma(\frac{2-a-c}{2}+b)},
 \enm
where $Re(b)>0$. Employ the substitution $a\to 1+a$ in the last
equation to gain
 \bnm
\quad _3F_2\ffnk{cccc}{1}{1+a,-a,b}{c,1+2b-c} =\frac{\pi
2^{1-2b}\Gamma(c)\Gamma(1+2b-c)}
{\Gamma(\frac{1+a+c}{2})\Gamma(\frac{2+a-c}{2}+b)\Gamma(\frac{-a+c}{2})\Gamma(\frac{1-a-c}{2}+b)}.
 \enm
 The linear combination of the last two equations
gives
 \bmn\label{whipple-like}
 _3F_2\ffnk{cccc}{1}{a,-a,b}{c,1+2b-c}&&\xqdn\!=\frac{1}{2}\frac{\pi 2^{1-2b}\Gamma(c)\Gamma(1+2b-c)}
{\Gamma(\frac{a+c}{2})\Gamma(\frac{1+a-c}{2}+b)\Gamma(\frac{1-a+c}{2})\Gamma(\frac{2-a-c}{2}+b)}
\nnm\\&&\xqdn\!+\:\frac{1}{2}\frac{\pi2^{1-2b}\Gamma(c)\Gamma(1+2b-c)}
{\Gamma(\frac{1+a+c}{2})\Gamma(\frac{2+a-c}{2}+b)\Gamma(\frac{-a+c}{2})\Gamma(\frac{1-a-c}{2}+b)}.
 \emn
 According to Kummer's transformation formula (cf.
\citu{andrews-r}{p. 142}):
  \bnm \quad
\!\!_3F_2\ffnk{cccc}{1}{a,b,c}{d,e}
 =\frac{\Gamma(e)\Gamma(d+e-a-b-c)}{\Gamma(e-a)\Gamma(d+e-b-c)}
{_3F_2}\ffnk{cccc}{1}{a,d-b,d-c}{d,d+e-b-c},
 \enm
we achieve
  \bnm
  _3F_2\ffnk{cccc}{1}{c,a,b}{a-c,1+a-b}
 &&\xqdn\!=\frac{\Gamma(1+a-b)\Gamma(1+a-2b-2c)}{\Gamma(1+a-b-c)\Gamma(1+a-2b-c)}\\
 &&\xqdn\!\times\:{_3F_2}\ffnk{cccc}{1}{c,-c,a-b-c}{a-c,1+a-2b-c}.
 \enm
Evaluating the series on the right hand side by
\eqref{whipple-like}, we attain Lemma \ref{lemm} to finish the
proof.
\end{proof}

\begin{thm} \label{thm-c}
Let $x$ be a complex number. Then
 \bnm
 &&\xxqdn\sum_{k=0}^{n}(-1)^k\binm{n}{k}\frac{\binm{2x+k}{k}}{\binm{2x+n+k}{k}}H_{k}(x)\\
&&\xxqdn\:\:=\:4^{n-1}\frac{\binm{n-\frac{1}{2}}{n}}{\binm{2x+2n}{n}}\big\{H_n(x)+H_n-2H_{2n}\big\}
-\frac{4^{n-1}}{n}\frac{\binm{x+n-\frac{1}{2}}{n}}{\binm{2x+2n}{n}\binm{x+n}{n}}.
 \enm
\end{thm}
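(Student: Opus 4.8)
The plan is to reuse the ``differentiate a specialized identity'' mechanism of Theorems~\ref{thm-a} and~\ref{thm-b}, except that the operator now acts on a \emph{parameter} of Lemma~\ref{lemm} rather than on $x$ directly. First I would specialize the Dixon-like identity at $a=2x+1$, $b=x+1$, $c=-n$. At these values the factors $(b)_k$ and $(1+a-b)_k=(x+1)_k$ in the summand of the ${}_3F_2$ cancel, so the series collapses to the terminating sum $\sum_{k=0}^{n}(-1)^k\binom{n}{k}\binom{2x+k}{k}\big/\binom{2x+n+k}{k}$, i.e. the left-hand side of Theorem~\ref{thm-c} stripped of its harmonic weight. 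By Gauss's theorem (equivalently, by Lemma~\ref{lemm} itself) this base sum equals $2\cdot 4^{n-1}\binom{n-1/2}{n}\big/\binom{2x+2n}{n}$, and the two terms of Lemma~\ref{lemm} will ultimately become the two terms of the theorem.

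To recover the weight $H_k(x)$ I would differentiate Lemma~\ref{lemm} with respect to $b$ and only then specialize. On the summand one has $\mathcal{D}_b\log\big[(b)_k/(1+a-b)_k\big]=\sum_{i=0}^{k-1}\big(\tfrac1{b+i}+\tfrac1{1+a-b+i}\big)$, which at $a=2x+1$, $b=x+1$ equals $2H_k(x)$. Hence $\tfrac12\mathcal{D}_b$ applied to the left-hand side of Lemma~\ref{lemm}, evaluated at the specialization, is exactly the weighted sum in Theorem~\ref{thm-c}. It therefore remains to apply $\tfrac12\mathcal{D}_b$ to the two-term right-hand side of Lemma~\ref{lemm} and specialize.

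For the second gamma term, call it $T_2$, the evaluation is clean: it is regular at $a=2x+1$, $b=x+1$, $c=-n$, where its value is the full base constant $2\cdot 4^{n-1}\binom{n-1/2}{n}\big/\binom{2x+2n}{n}$ (the first term vanishes there, see below). Its logarithmic $b$-derivative is the digamma combination $\psi(x+n+1)-\psi(x+1)+\psi(\tfrac12)-\psi(\tfrac12+n)$, and using $\psi(x+n+1)-\psi(x+1)=H_n(x)$ together with $\psi(\tfrac12+n)-\psi(\tfrac12)=2H_{2n}-H_n$ this equals $H_n(x)+H_n-2H_{2n}$; multiplying, $\tfrac12\mathcal{D}_bT_2$ produces the first term of Theorem~\ref{thm-c}. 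The first gamma term $T_1$ carries a factor $1/\Gamma(\tfrac{1+a}2-b)$, and since $\tfrac{1+a}2-b\to0$ at the specialization this factor is $1/\Gamma(0)=0$, so $T_1$ itself vanishes; however, setting $b=x+1-\epsilon$ one finds $\Gamma(n+\epsilon)/\Gamma(\epsilon)=(\epsilon)_n\sim(n-1)!\,\epsilon$, so the $b$-derivative survives as a finite residue. Extracting it and collapsing the resulting half-integer gamma quotients by the Legendre duplication formula gives the second term $-\tfrac{4^{n-1}}{n}\binom{x+n-1/2}{n}\big/\big[\binom{2x+2n}{n}\binom{x+n}{n}\big]$.

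The main obstacle is this degenerate first term: the undifferentiated $T_1$ is $0$, yet its parameter-derivative is a nonzero residue arising from the collision of a zero of $1/\Gamma$ with a pole of $\psi$ (equivalently, from the factor $(\epsilon)_n$), so the computation must be organized as a genuine limit rather than a naive product rule. The remaining difficulty is purely the duplication-formula bookkeeping needed to turn the strings of half-integer $\Gamma$-quotients and digamma values into the compact binomial-and-harmonic form. The case $n=1$, where the whole left-hand side reduces to $-\tfrac{2x+1}{2(x+1)^2}$, fixes the normalization and the sign conventions.
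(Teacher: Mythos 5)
Your proposal is correct and follows essentially the same route as the paper: the paper likewise takes the $c=-n$ case of Lemma~\ref{lemm}, differentiates with respect to the second parameter (its $y$, which is your $b-1$), and then specializes to $a=1+2x$, $b=1+x$, so that the weight becomes $2H_k(x)$ and the two gamma terms of the lemma turn into the two terms of the theorem, with the same degenerate term (vanishing prefactor times blowing-up harmonic weight) supplying the $-\tfrac{4^{n-1}}{n}$ piece. The only difference is how that degeneracy is handled: you extract it as an $\epsilon$-residue via $\Gamma(n+\epsilon)/\Gamma(\epsilon)=(\epsilon)_n\sim(n-1)!\,\epsilon$, whereas the paper, before specializing, rewrites the differentiated identity using $H_n(\tfrac{x-1}{2}-y)=2H_{2n}(x-2y)-H_n(\tfrac{x}{2}-y)$ so that every product already has a finite limit --- the same $0\times\infty$ cancellation, organized algebraically rather than as a limit.
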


\begin{proof}
The case $c=-n$ of Lemma \ref{lemm} can be written as
 \bnm
 &&\xxqdn\qqdn_3F_2\ffnk{cccc}{1}{a,b,-n}{1+a-b,a+n}\\
&&\xxqdn\qqdn\:\:=\:2^{2n-1}\frac{(\frac{1+a}{2})_n(\frac{2+a}{2}-b)_n}{(a+n)_n(1+a-b)_n}
+2^{2n-1}\frac{(\frac{a}{2})_n(\frac{1+a}{2}-b)_n}{(a+n)_n(1+a-b)_n}.
 \enm
Replace respectively $a$ and $b$ by $1+x$ and $1+y$ in the last
equation to obtain
 \bmn\label{equation-c}
  &&\xxqdn\sum_{k=0}^{n}(-1)^k\binm{n}{k}\frac{\binm{x+k}{k}\binm{y+k}{k}}{\binm{x+n+k}{k}\binm{x-y+k}{k}}
 \nnm\\&&\xxqdn\:\:=\:2^{2n-1}\frac{\binm{\frac{x}{2}+n}{n}\binm{\frac{x-1}{2}-y+n}{n}}{\binm{x+2n}{n}\binm{x-y+n}{n}}
+2^{2n-1}\frac{\binm{\frac{x-1}{2}+n}{n}\binm{\frac{x-2}{2}-y+n}{n}}{\binm{x+2n}{n}\binm{x-y+n}{n}}.
 \emn
 Applying the derivative operator $\mathcal{D}_y$ to both sides of
 \eqref{equation-c}, we get
 \bmn\label{equation-d}
  &&\xqdn\sum_{k=0}^{n}(-1)^k\binm{n}{k}\frac{\binm{x+k}{k}\binm{y+k}{k}}{\binm{x+n+k}{k}\binm{x-y+k}{k}}
  \big\{H_k(y)+H_k(x-y)\big\}
 \nnm\\\nnm&&\xqdn\:\:=\:2^{2n-1}\frac{\binm{\frac{x}{2}+n}{n}\binm{\frac{x-1}{2}-y+n}{n}}{\binm{x+2n}{n}\binm{x-y+n}{n}}
 \big\{H_n(x-y)-H_n(\tfrac{x-1}{2}-y)\big\}
\\&&\xqdn\:\:+\:\:
2^{2n-1}\frac{\binm{\frac{x-1}{2}+n}{n}\binm{\frac{x-2}{2}-y+n}{n}}{\binm{x+2n}{n}\binm{x-y+n}{n}}
\big\{H_n(x-y)-H_n(\tfrac{x-2}{2}-y)\big\}.
 \emn
In accordance with the relation
 \bnm
\qquad H_n(\tfrac{x-1}{2}-y)=2H_{2n}(x-2y)-H_n(\tfrac{x}{2}-y),
 \enm
\eqref{equation-d} can be reformulated as
 \bnm
  &&\xqdn\sum_{k=0}^{n}(-1)^k\binm{n}{k}\frac{\binm{x+k}{k}\binm{y+k}{k}}{\binm{x+n+k}{k}\binm{x-y+k}{k}}
  \big\{H_k(y)+H_k(x-y)\big\}
 \\&&\xqdn\:\:=\:2^{2n-1}\frac{\binm{\frac{x}{2}+n}{n}\binm{\frac{x-1}{2}-y+n}{n}}{\binm{x+2n}{n}\binm{x-y+n}{n}}
 \big\{H_n(x-y)+H_n(\tfrac{x}{2}-y)-2H_{2n}(x-2y)\big\}
\\&&\xqdn\:\:+\:\:
2^{2n-1}\frac{\binm{\frac{x-1}{2}+n}{n}\binm{\frac{x-2}{2}-y+n}{n}}{\binm{x+2n}{n}\binm{x-y+n}{n}}
\bigg\{H_n(x-y)-H_n(\tfrac{x}{2}-y)+\frac{2}{x-2y+2n}\bigg\}
\\&&\xqdn\:\:-\:\:
2^{2n-1}\frac{\binm{\frac{x-1}{2}+n}{n}\binm{\tfrac{x}{2}-y+n}{n}}{\binm{x+2n}{n}\binm{x-y+n}{n}}\frac{2}{x-2y+2n}.
 \enm
Substitute respectively $x$ and $y$ by $2x$ and $x$ in the last
equation to produce Theorem \ref{thm-c}.
\end{proof}

Fixing $x=p$ with $p\in \mathbb{N}_0$ in Theorem \ref{thm-c} and
exploiting \eqref{equation-c}, we gain the summation formula on
harmonic numbers:
 \bmn\label{harmonic-cc}
 &&\xxqdn\sum_{k=0}^{n}(-1)^k\binm{n}{k}\frac{\binm{2p+k}{k}}{\binm{2p+n+k}{k}}H_{p+k}
  \nnm\\
&&\xxqdn\:\:=\:4^{n-1}\frac{\binm{n-\frac{1}{2}}{n}}{\binm{2p+2n}{n}}\big\{H_{p+n}+H_p+H_n-2H_{2n}\big\}
-\frac{4^{n-1}}{n}\frac{\binm{p+n-\frac{1}{2}}{n}}{\binm{2p+2n}{n}\binm{p+n}{n}}.
 \emn

 \begin{thm} \label{thm-d}
Let $x$ be a complex number. Then
 \bnm
 &&\xxqdn\xqdn\qdn\sum_{k=0}^{n}(-1)^k\binm{n}{k}
 \frac{\binm{\frac{x}{2}+k}{k}\binm{x-\frac{1}{2}+k}{k}}{\binm{\frac{x-1}{2}+k}{k}\binm{x-\frac{1}{2}+n+k}{k}}H_{2k}(x)\\
&&\xxqdn\xqdn\qdn\:\:=\:4^{n-1}\frac{\binm{\frac{x}{2}-\frac{1}{4}+n}{n}\binm{-\frac{3}{4}+n}{n}}{\binm{\frac{x-1}{2}+n}{n}\binm{x-\frac{1}{2}+2n}{n}}
\big\{H_n(\tfrac{x-1}{2})-H_n(-\tfrac{3}{4})\big\}\\
&&\xxqdn\xqdn\qdn\:\:+\:\:4^{n-1}\frac{\binm{\frac{x}{2}-\frac{3}{4}+n}{n}\binm{-\frac{5}{4}+n}{n}}{\binm{\frac{x-1}{2}+n}{n}\binm{x-\frac{1}{2}+2n}{n}}
\big\{H_n(\tfrac{x-1}{2})-H_n(-\tfrac{5}{4})\big\}.
 \enm
\end{thm}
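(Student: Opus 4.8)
The plan is to reuse the differentiated Dixon-like summation \eqref{equation-d}, obtained by applying $\mathcal{D}_y$ to \eqref{equation-c}, and to specialize its two free parameters so that the two distinct harmonic-number arguments on its left-hand side coalesce into a single $H_{2k}(x)$. Whereas in Theorem \ref{thm-c} the choice $x\to 2x$, $y\to x$ fused $H_k(y)+H_k(x-y)$ into $2H_k(x)$, here the complementary manoeuvre is required: one splits a doubled index rather than merging two equal ones. The enabling fact is the bisection identity
\[H_k(\tfrac{x}{2})+H_k(\tfrac{x-1}{2})=2H_{2k}(x),\]
which drops out immediately from separating $\sum_{j=1}^{2k}\frac{1}{x+j}$ into its even- and odd-indexed parts and extracting a factor $\tfrac12$ from each. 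This is the ``bisection method'' named in the section heading, and it is the only essentially new ingredient.

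First I would perform in \eqref{equation-d} the replacements $x\to x-\tfrac12$ and $y\to\tfrac{x}{2}$. On the left-hand side the combination $H_k(y)+H_k(x-y)$ becomes $H_k(\tfrac{x}{2})+H_k(\tfrac{x-1}{2})$, which the bisection identity converts to $2H_{2k}(x)$; simultaneously $\binom{y+k}{k}\to\binom{\frac{x}{2}+k}{k}$, $\binom{x+k}{k}\to\binom{x-\frac12+k}{k}$, $\binom{x-y+k}{k}\to\binom{\frac{x-1}{2}+k}{k}$ and $\binom{x+n+k}{k}\to\binom{x-\frac12+n+k}{k}$, so that the surviving quotient of binomials is exactly the one in the statement. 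Hence the left-hand side of \eqref{equation-d} reduces to twice the sum appearing in Theorem \ref{thm-d}.

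Next I would push the same substitution through the right-hand side of \eqref{equation-d}, a routine but error-prone bookkeeping step. The argument $\tfrac{x}{2}$ shifts to $\tfrac{x}{2}-\tfrac14$ and $\tfrac{x-1}{2}$ to $\tfrac{x}{2}-\tfrac34$, the two difference-arguments $\tfrac{x-1}{2}-y$ and $\tfrac{x-2}{2}-y$ collapse to the constants $-\tfrac34$ and $-\tfrac54$, and $x-y$ becomes $\tfrac{x-1}{2}$; accordingly the harmonic factors turn into $H_n(\tfrac{x-1}{2})-H_n(-\tfrac34)$ and $H_n(\tfrac{x-1}{2})-H_n(-\tfrac54)$. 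Together with the common denominator $\binom{\frac{x-1}{2}+n}{n}\binom{x-\frac12+2n}{n}$ this produces precisely the two summands displayed in Theorem \ref{thm-d}, up to the overall constant. Dividing both sides by $2$ then turns the left-hand factor $2H_{2k}(x)$ into $H_{2k}(x)$ and the right-hand prefactor $2^{2n-1}$ into $4^{n-1}$, which completes the match.

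I expect the principal difficulty to be computational rather than conceptual: one must verify that every half-integer binomial argument lands in exactly the written form, since a single slip in the quarter-integer shifts $-\tfrac14,-\tfrac34,-\tfrac54$ would spoil the entire right-hand side. The decisive observation is that the correct specialization is $x\to x-\tfrac12$, $y\to\tfrac{x}{2}$, forced by matching the denominator $\binom{x-\frac12+n+k}{k}$ and by requiring $H_k(y)+H_k(x-y)$ to become the bisected harmonic number $2H_{2k}(x)$.
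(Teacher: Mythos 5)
Your proposal is correct and coincides with the paper's own proof: both perform the substitution $x\to x-\tfrac12$, $y\to\tfrac{x}{2}$ in \eqref{equation-d} and then invoke the bisection relation \eqref{relation}, $H_k(\tfrac{x}{2})+H_k(\tfrac{x-1}{2})=2H_{2k}(x)$, dividing by $2$ to turn $2^{2n-1}$ into $4^{n-1}$. All of your quarter-integer bookkeeping (the arguments $\tfrac{x}{2}-\tfrac14$, $\tfrac{x}{2}-\tfrac34$, $-\tfrac34$, $-\tfrac54$) checks out exactly as in the paper.
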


\begin{proof}
Perform the replacements $x\to x-\frac{1}{2}$, $y\to \frac{x}{2}$ in
\eqref{equation-d} to achieve
 \bnm
 &&\xxqdn\sum_{k=0}^{n}(-1)^k\binm{n}{k}
 \frac{\binm{\frac{x}{2}+k}{k}\binm{x-\frac{1}{2}+k}{k}}{\binm{\frac{x-1}{2}+k}{k}\binm{x-\frac{1}{2}+n+k}{k}}
 \big\{H_{k}(\tfrac{x}{2})+H_{k}(\tfrac{x-1}{2})\big\}\\
&&\xxqdn\:\:=\:2^{2n-1}\frac{\binm{\frac{x}{2}-\frac{1}{4}+n}{n}\binm{-\frac{3}{4}+n}{n}}{\binm{\frac{x-1}{2}+n}{n}\binm{x-\frac{1}{2}+2n}{n}}
\big\{H_n(\tfrac{x-1}{2})-H_n(-\tfrac{3}{4})\big\}\\
&&\xxqdn\:\:+\:\:2^{2n-1}\frac{\binm{\frac{x}{2}-\frac{3}{4}+n}{n}\binm{-\frac{5}{4}+n}{n}}{\binm{\frac{x-1}{2}+n}{n}\binm{x-\frac{1}{2}+2n}{n}}
\big\{H_n(\tfrac{x-1}{2})-H_n(-\tfrac{5}{4})\big\}.
 \enm
By means of the relation
 \bmn\label{relation}
H_{k}(\tfrac{x}{2})+H_{k}(\tfrac{x-1}{2})=2H_{2k}(x),
 \emn
the last equation can be expressed as Theorem \ref{thm-d} to
complete the proof.
\end{proof}

Setting $x=p$ with $p\in \mathbb{N}_0$ in Theorem \ref{thm-d} and
utilizing \eqref{equation-c}, we attain the summation formula on
harmonic numbers and generalized harmonic numbers:
 \bmn\label{harmonic-dd}
 &&\xqdn\qdn\sum_{k=0}^{n}(-1)^k\binm{n}{k}
 \frac{\binm{\frac{p}{2}+k}{k}\binm{p-\frac{1}{2}+k}{k}}{\binm{\frac{p-1}{2}+k}{k}\binm{p-\frac{1}{2}+n+k}{k}}H_{p+2k}
 \nnm\\\nnm
&&\xqdn\qdn\:\:=\:4^{n-1}\frac{\binm{\frac{p}{2}-\frac{1}{4}+n}{n}\binm{-\frac{3}{4}+n}{n}}{\binm{\frac{p-1}{2}+n}{n}\binm{p-\frac{1}{2}+2n}{n}}
\big\{2H_p+H_n(\tfrac{p-1}{2})-H_n(-\tfrac{3}{4})\big\}\\
&&\xqdn\qdn\:\:+\:\:4^{n-1}\frac{\binm{\frac{p}{2}-\frac{3}{4}+n}{n}\binm{-\frac{5}{4}+n}{n}}{\binm{\frac{p-1}{2}+n}{n}\binm{p-\frac{1}{2}+2n}{n}}
\big\{2H_p+H_n(\tfrac{p-1}{2})-H_n(-\tfrac{5}{4})\big\}.
 \emn
\section{Dougall's identity, bisection method and summation formulas \\involving generalized harmonic numbers}
\begin{thm} \label{thm-e}
Let $x$ and $y$ be complex numbers. Then
 \bnm
 &&\xxqdn\sum_{k=0}^{n}(-1)^k\binm{n}{k}\frac{\binm{2x+k}{k}\binm{2y+k}{k}}{\binm{2x+n+k}{k}\binm{2x-2y+k}{k}}
 \frac{1+2x+2k}{1+2x+n+k}H_{k}(x)\\
&&\xxqdn\:\:=\:\frac{1}{2}\frac{\binm{2x+n}{n}\binm{x-2y-1+n}{n}}{\binm{x+n}{n}\binm{2x-2y+n}{n}}\big\{H_n(x)-H_{n}(x-2y-1)\big\}.
 \enm
\end{thm}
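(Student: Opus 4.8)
The plan is to follow the template of Theorems \ref{thm-c} and \ref{thm-d}: manufacture a harmonic-free master identity by specializing Dougall's ${}_5F_4$-sum \eqref{dougall}, and then create the harmonic numbers by differentiating in an auxiliary parameter that is afterwards identified with $x$. The bisection flavour of the section enters precisely at that identification, where two half-arguments fold into one.

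First I would terminate \eqref{dougall} by setting $d\to-n$ and then substitute $a\to 1+2x$, $b\to 1+2y$, $c\to 1+z$, where $z$ is a fresh variable kept distinct from $x$. Converting Pochhammer symbols to binomials via $(1+w)_k=k!\binom{w+k}{k}$ and $(-n)_k=(-1)^kk!\binom{n}{k}$, the well-poised pair $\{1+\tfrac a2,\tfrac a2\}$ yields the factor $\frac{1+2x+2k}{1+2x}$, the numerator parameters $b,c$ yield $\binom{2y+k}{k}\binom{z+k}{k}$, and the reflected parameters $1+a-b,\,1+a-c,\,1+a-d$ yield the denominator binomials $\binom{2x-2y+k}{k}$, $\binom{2x-z+k}{k}$, $\binom{1+2x+n+k}{k}$. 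The right-hand Gamma-quotient collapses to $\frac{(2+2x)_n(2x-2y-z)_n}{(1+2x-2y)_n(1+2x-z)_n}$. Call the resulting binomial identity (E); it is the exact analogue of \eqref{equation-c}. The reason for keeping $z$ separate is that at $z=x$ the two binomials $\binom{z+k}{k}$ and $\binom{2x-z+k}{k}$ become equal and cancel, returning precisely the summand of Theorem \ref{thm-e}, up to the $k$-independent factor $\frac{1+2x+n}{1+2x}$ produced by the cosmetic regrouping that turns $\frac{1+2x+2k}{1+2x}\big/\binom{1+2x+n+k}{k}$ into $\frac{1+2x+2k}{1+2x+n+k}\big/\binom{2x+n+k}{k}$.

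Next I would apply $\mathcal{D}_z$ to (E). On the left only $\binom{z+k}{k}$ and $\binom{2x-z+k}{k}$ depend on $z$, so by $\mathcal{D}_z\binom{z+k}{k}=\binom{z+k}{k}H_k(z)$ together with the chain rule for $2x-z$ each summand is merely multiplied by $H_k(z)+H_k(2x-z)$. On the right, logarithmic differentiation of the four $z$-dependent Gamma factors gives a sum of four digamma values. Setting $z=x$ then makes the left factor equal to $2H_k(x)$ and cancels the $z$-binomials, while on the right the digammas telescope through $\psi(1+x+n)-\psi(1+x)=H_n(x)$ and $\psi(x-2y+n)-\psi(x-2y)=H_n(x-2y-1)$ into exactly $\{H_n(x)-H_n(x-2y-1)\}$. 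Dividing out the factor $2$ supplies the $\tfrac12$ in the statement, and the master value at $z=x$, rewritten with $\frac{1+2x}{1+2x+n}(2+2x)_n=(1+2x)_n$, becomes $\frac{\binom{2x+n}{n}\binom{x-2y-1+n}{n}}{\binom{x+n}{n}\binom{2x-2y+n}{n}}$, completing Theorem \ref{thm-e}.

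The delicate point will be the constant bookkeeping rather than any deep difficulty: the well-poised prefactor appears naturally as $\frac{1+2x+2k}{1+2x}$ instead of the $\frac{1+2x+2k}{1+2x+n+k}$ of the statement, and the intervening $k$-independent factor $\frac{1+2x+n}{1+2x}$ must be carried through so that it cancels correctly against the factor $2$ and the Dougall value to leave the clean $\tfrac12$-normalization. Verifying the four-term digamma telescoping, with the correct signs coming from $c$ sitting once in the numerator and once in the denominator of each reflected pair (each pair separated by $d=-n$, hence producing an $H_n$), and confirming that no $z$-free Gamma factor contributes under $\mathcal{D}_z$, are then routine. Since the series terminates, Dougall's identity is a polynomial identity in the remaining parameters, so no convergence condition obstructs the substitutions or the differentiation.
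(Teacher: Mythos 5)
Your proposal is correct and is essentially the paper's own proof: the paper likewise terminates Dougall's ${}_5F_4$ identity with $d=-n$, rewrites it in binomial form (its equation \eqref{equation-e}), applies $\mathcal{D}_z$ to the third parameter (its \eqref{equation-f}), and then identifies that parameter with $x$ (via the substitution $x\to2x$, $y\to2y$, $z\to x$) so that $H_k(z)+H_k(2x-z)$ folds into $2H_k(x)$ and the factor $\tfrac12$ appears. The only difference is cosmetic: you specialize $a=1+2x$, $b=1+2y$ at the outset, whereas the paper keeps generic parameters until the final substitution.
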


\begin{proof}
Employ the substitutions $a\to1+x$, $b\to1+y$, $c\to1+z$ in
\eqref{dougall} to obtain
 \bmn\label{equation-e}
  &&\xxqdn\sum_{k=0}^{n}(-1)^k\binm{n}{k}\frac{\binm{x+k}{k}\binm{y+k}{k}\binm{z+k}{k}}
  {\binm{x+n+k}{k}\binm{x-y+k}{k}\binm{x-z+k}{k}}\frac{1+x+2k}{1+x+n+k}=
\frac{\binm{x+n}{n}\binm{x-y-z-1+n}{n}}{\binm{x-y+n}{n}\binm{x-z+n}{n}}.
 \emn
 Applying the derivative operator $\mathcal{D}_z$ to both sides of
 \eqref{equation-e}, we get
 \bmn\label{equation-f}
  &&\xxqdn\sum_{k=0}^{n}(-1)^k\binm{n}{k}\frac{\binm{x+k}{k}\binm{y+k}{k}\binm{z+k}{k}}
  {\binm{x+n+k}{k}\binm{x-y+k}{k}\binm{x-z+k}{k}}\frac{1+x+2k}{1+x+n+k}
  \big\{H_k(z)+H_k(x-z)\big\}
 \nnm\\&&\xxqdn\:\:=\:
 \frac{\binm{x+n}{n}\binm{x-y-z-1+n}{n}}{\binm{x-y+n}{n}\binm{x-z+n}{n}}
 \big\{H_n(x-z)-H_n(x-y-z-1)\big\}.
 \emn
Replace respectively $x$, $y$ and $z$ by $2x$, $2y$ and $x$ in the
last equation to offer Theorem \ref{thm-e}.
\end{proof}

Taking $x=p$, $y=\frac{q}{2}$ with $p,q\in \mathbb{N}_0$ in Theorem
\ref{thm-e} and using \eqref{equation-e}, we gain the summation
formula on harmonic numbers:
 \bmn\label{harmonic-ee}
 &&\xxqdn\xqdn\qdn\sum_{k=0}^{n}(-1)^k\binm{n}{k}\frac{\binm{2p+k}{k}\binm{q+k}{k}}{\binm{2p+n+k}{k}\binm{2p-q+k}{k}}
 \frac{1+2p+2k}{1+2p+n+k}H_{p+k}\nnm\\
&&\xxqdn\xqdn\qdn\:\:=\:\frac{1}{2}\frac{\binm{2p+n}{n}\binm{p-q+n-1}{n}}{\binm{p+n}{n}\binm{2p-q+n}{n}}
\big\{H_p+H_{p+n}+H_{p-q-1}-H_{p-q+n-1}\big\}.
 \emn

 \begin{thm} \label{thm-f}
Let $x$ and $y$ be complex numbers. Then
 \bnm
 &&\xqdn\sum_{k=0}^{n}(-1)^k\binm{n}{k}
 \frac{\binm{\frac{x}{2}+k}{k}\binm{x-\frac{1}{2}+k}{k}\binm{y+k}{k}}
 {\binm{\frac{x-1}{2}+k}{k}\binm{x-\frac{1}{2}+n+k}{k}\binm{x-y-\frac{1}{2}+k}{k}}
  \frac{1+2x+4k}{1+2x+2n+2k}H_{2k}(x)\\
&&\xqdn\:\:=\:\frac{1}{2}\frac{\binm{x-\frac{1}{2}+n}{n}\binm{\frac{x-3}{2}-y+n}{n}}
{\binm{\frac{x-1}{2}+n}{n}\binm{x-y-\frac{1}{2}+n}{n}}
\big\{H_n(\tfrac{x-1}{2})-H_n(\tfrac{x-3}{2}-y)\big\}.
 \enm
\end{thm}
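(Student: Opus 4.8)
The plan is to mirror exactly the derivation of Theorem~\ref{thm-d}, but starting from the Dougall-based identity \eqref{equation-f} rather than from the Dixon-like identity \eqref{equation-d}. The key observation is that the harmonic-number combination appearing on the left of \eqref{equation-f}, namely $H_k(z)+H_k(x-z)$, can be collapsed into a single term $2H_{2k}(x)$ by means of the bisection relation \eqref{relation}, provided its two arguments are forced to be $\tfrac{x}{2}$ and $\tfrac{x-1}{2}$. This is precisely the same device used to pass from \eqref{equation-d} to Theorem~\ref{thm-d}, only now applied to the variable $z$ with respect to which the derivative in \eqref{equation-f} was taken.

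Concretely, I would first perform the replacements $x\to x-\tfrac{1}{2}$ and $z\to\tfrac{x}{2}$ in \eqref{equation-f}, leaving $y$ untouched. Then $z=\tfrac{x}{2}$ and $x-z=(x-\tfrac12)-\tfrac{x}{2}=\tfrac{x-1}{2}$, so the bracketed factor becomes $H_k(\tfrac{x}{2})+H_k(\tfrac{x-1}{2})$, which by \eqref{relation} equals $2H_{2k}(x)$. The remaining work is pure bookkeeping on the binomial arguments: the numerator binomials $\binom{x+k}{k}\binom{y+k}{k}\binom{z+k}{k}$ turn into $\binom{x-\frac12+k}{k}\binom{y+k}{k}\binom{\frac{x}{2}+k}{k}$, while the denominator binomials $\binom{x+n+k}{k}\binom{x-y+k}{k}\binom{x-z+k}{k}$ become $\binom{x-\frac12+n+k}{k}\binom{x-y-\frac12+k}{k}\binom{\frac{x-1}{2}+k}{k}$, matching the summand in the statement of Theorem~\ref{thm-f} after reordering.

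The one point that genuinely deserves a careful check is the Dougall weight factor $\tfrac{1+x+2k}{1+x+n+k}$: after the replacement $x\to x-\tfrac12$ it reads $\tfrac{\frac12+x+2k}{\frac12+x+n+k}$, and clearing the halves turns this into $\tfrac{1+2x+4k}{1+2x+2n+2k}$, which is exactly the weight in the theorem. On the right-hand side the same substitution sends the four binomials to $\binom{x-\frac12+n}{n}$, $\binom{\frac{x-3}{2}-y+n}{n}$, $\binom{\frac{x-1}{2}+n}{n}$ and $\binom{x-y-\frac12+n}{n}$, while the harmonic bracket $H_n(x-z)-H_n(x-y-z-1)$ becomes $H_n(\tfrac{x-1}{2})-H_n(\tfrac{x-3}{2}-y)$. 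Finally, dividing both sides by the factor of $2$ produced by \eqref{relation} yields the stated $\tfrac12$ in front of the right-hand side and completes the proof. I expect no real obstacle here, since the argument is a direct transcription of the Theorem~\ref{thm-d} proof applied to \eqref{equation-f}; the only risk is an arithmetic slip in one of the shifted half-integer binomial arguments, so the verification of the weight factor and of the argument $\tfrac{x-3}{2}-y$ is where I would concentrate the checking.
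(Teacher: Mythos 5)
Your proposal is correct and coincides with the paper's own proof: the authors likewise substitute $x\to x-\tfrac{1}{2}$, $z\to\tfrac{x}{2}$ in \eqref{equation-f} and then invoke the bisection relation \eqref{relation} to replace $H_k(\tfrac{x}{2})+H_k(\tfrac{x-1}{2})$ by $2H_{2k}(x)$, producing the factor $\tfrac{1}{2}$ on the right. All of your bookkeeping on the shifted binomial arguments and the weight factor checks out against the stated theorem.
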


\begin{proof}
Substitute respectively $x$ and $z$ by $x-\frac{1}{2}$ and
$\frac{x}{2}$ in \eqref{equation-f} to achieve
 \bnm
 &&\xxqdn\sum_{k=0}^{n}(-1)^k\binm{n}{k}
 \frac{\binm{\frac{x}{2}+k}{k}\binm{x-\frac{1}{2}+k}{k}\binm{y+k}{k}}
 {\binm{\frac{x-1}{2}+k}{k}\binm{x-\frac{1}{2}+n+k}{k}\binm{x-y-\frac{1}{2}+k}{k}}
  \frac{1+2x+4k}{1+2x+2n+2k}\big\{H_{k}(\tfrac{x}{2})+H_{k}(\tfrac{x-1}{2})\big\}\\
&&\xxqdn\:\:=\:\frac{\binm{x-\frac{1}{2}+n}{n}\binm{\frac{x-3}{2}-y+n}{n}}
{\binm{\frac{x-1}{2}+n}{n}\binm{x-y-\frac{1}{2}+n}{n}}
\big\{H_n(\tfrac{x-1}{2})-H_n(\tfrac{x-3}{2}-y)\big\}.
 \enm
In terms of \eqref{relation}, the last equation can be manipulated
as Theorem \ref{thm-f} to finish the proof.
\end{proof}

Fixing $x=p$, $y=q$ with $p,q\in \mathbb{N}_0$ in Theorem
\ref{thm-f} and exploiting \eqref{equation-e}, we attain the
summation formula on harmonic numbers and generalized harmonic
numbers:
 \bmn\label{harmonic-ff}
 &&\xqdn\qqdn\sum_{k=0}^{n}(-1)^k\binm{n}{k}
 \frac{\binm{\frac{p}{2}+k}{k}\binm{p-\frac{1}{2}+k}{k}\binm{q+k}{k}}
 {\binm{\frac{p-1}{2}+k}{k}\binm{p-\frac{1}{2}+n+k}{k}\binm{p-q-\frac{1}{2}+k}{k}}
  \frac{1+2p+4k}{1+2p+2n+2k}H_{p+2k}
  \nnm\\
&&\xqdn\qqdn\:\:=\:\frac{1}{2}\frac{\binm{p-\frac{1}{2}+n}{n}\binm{\frac{p-3}{2}-q+n}{n}}
{\binm{\frac{p-1}{2}+n}{n}\binm{p-q-\frac{1}{2}+n}{n}}
\big\{2H_p+H_n(\tfrac{p-1}{2})-H_n(\tfrac{p-3}{2}-q)\big\}.
 \emn

\begin{thm} \label{thm-g}
Let $x$ be a complex number. Then
 \bnm
\qquad
\sum_{k=0}^{n}(-1)^k\binm{n}{k}\frac{\binm{2x+k}{k}}{\binm{2x+n+k}{k}}
 \frac{1+2x+2k}{1+2x+n+k}H_{k}^2(x)
=\frac{1}{2n}\frac{\binm{2x+n}{n}}{\binm{x+n}{n}^2}\big\{H_{n-1}-H_{n}(x)\big\}.
 \enm
\end{thm}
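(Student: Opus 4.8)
The plan is to manufacture the quadratic factor $H_k^2(x)$ by differentiating \eqref{equation-f} once more in $z$, in direct analogy with the passage from Theorem~\ref{thm-a} to Theorem~\ref{thm-b}, where a single harmonic number was upgraded to a square. First I would apply $\mathcal{D}_z$ to both sides of \eqref{equation-f}. Using $\mathcal{D}_z\binm{z+k}{k}=\binm{z+k}{k}H_k(z)$, the chain-rule relation $\mathcal{D}_z\binm{x-z+k}{k}=-\binm{x-z+k}{k}H_k(x-z)$, and $\mathcal{D}_zH_k^{\langle\ell\rangle}(z)=-\ell H_k^{\langle\ell+1\rangle}(z)$, the left-hand summand picks up the factor $\{H_k(z)+H_k(x-z)\}^2-H_k^{\langle2\rangle}(z)+H_k^{\langle2\rangle}(x-z)$, while the right-hand side becomes $R\bigl[\{H_n(x-z)-H_n(x-y-z-1)\}^2+H_n^{\langle2\rangle}(x-z)-H_n^{\langle2\rangle}(x-y-z-1)\bigr]$, where $R$ denotes the closed form on the right of \eqref{equation-f}.

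Next I would perform the substitution $x\to2x$, $z\to x$, mirroring the proof of Theorem~\ref{thm-e}. On the left the ratio $\binm{z+k}{k}/\binm{x-z+k}{k}$ collapses to $1$, the combination $-H_k^{\langle2\rangle}(z)+H_k^{\langle2\rangle}(x-z)$ vanishes identically, and $\{H_k(z)+H_k(x-z)\}^2$ becomes $4H_k^2(x)$; thus the left-hand side reduces to $4$ times the sum in Theorem~\ref{thm-g}, apart from one surviving factor $\binm{y+k}{k}/\binm{2x-y+k}{k}$. To remove this factor I would finally let $y\to x$, which sends $\binm{y+k}{k}/\binm{2x-y+k}{k}\to1$ termwise and leaves the left-hand side manifestly regular and equal to $4$ times the target sum.

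The delicate point is the behaviour of the right-hand side as $y\to x$. After the substitution its prefactor carries $\binm{x-y-1+n}{n}$, whose argument drops to $n-1<n$ at $y=x$ so that this binomial vanishes, while the bracket contains $H_n(x-y-1)$ and $H_n^{\langle2\rangle}(x-y-1)$, both of which blow up because the $j=1$ summand has denominator $x-y-1+1=x-y\to0$. Writing $\delta=x-y$ I would expand $\binm{x-y-1+n}{n}=\delta/n+O(\delta^2)$, $H_n(x-y-1)=\delta^{-1}+H_{n-1}+O(\delta)$, and $H_n^{\langle2\rangle}(x-y-1)=\delta^{-2}+H_{n-1}^{\langle2\rangle}+O(\delta)$. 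The $\delta^{-2}$ contributions coming from $\{H_n(x)-H_n(x-y-1)\}^2$ and from $-H_n^{\langle2\rangle}(x-y-1)$ then cancel, leaving the simple pole $-2\{H_n(x)-H_{n-1}\}\delta^{-1}+O(1)$; multiplying by the vanishing binomial $\delta/n$ yields the finite limit $\tfrac{2}{n}\{H_{n-1}-H_n(x)\}$ times $\binm{2x+n}{n}/\binm{x+n}{n}^2$.

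I expect this $0\times\infty$ cancellation to be the main obstacle, since one must retain the singular expansions to the correct order to witness the $\delta^{-2}$ terms annihilate and to isolate the surviving residue; the rest of the argument is the now-routine collapse of binomial ratios and harmonic combinations under the substitution. Equating the two sides and dividing the resulting identity by $4$ then produces exactly $\sum_{k=0}^{n}(-1)^k\binm{n}{k}\frac{\binm{2x+k}{k}}{\binm{2x+n+k}{k}}\frac{1+2x+2k}{1+2x+n+k}H_k^2(x)=\frac{1}{2n}\frac{\binm{2x+n}{n}}{\binm{x+n}{n}^2}\{H_{n-1}-H_n(x)\}$, which is Theorem~\ref{thm-g}.
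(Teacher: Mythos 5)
Your proof is correct, but it takes a genuinely different route from the paper's. The paper obtains $H_k^2(x)$ by differentiating in two \emph{different} variables: it starts from Theorem~\ref{thm-e} (which already encodes the single $\mathcal{D}_z$ applied to \eqref{equation-e}), first rewrites its right-hand side algebraically --- trading $\binm{x-2y-1+n}{n}$ and $H_n(x-2y-1)$ for expressions in $x-2y$ so that the formula is manifestly regular at $y=x/2$ --- and only then applies $\mathcal{D}_y$ and substitutes $y=x/2$ outright; the square arises as $H_k(x)\cdot\{H_k(2y)+H_k(2x-2y)\}$ evaluated at $y=x/2$, and no limit or second-order harmonic numbers ever enter the left-hand side. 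You instead apply $\mathcal{D}_z$ a \emph{second} time to \eqref{equation-f}, so the square arises from $\{H_k(z)+H_k(x-z)\}^2$, with the accompanying terms $-H_k^{\langle2\rangle}(z)+H_k^{\langle2\rangle}(x-z)$ cancelling identically under $x\to2x$, $z\to x$; you then eliminate the leftover parameter by the singular limit $y\to x$, and your Laurent expansions in $\delta=x-y$ are exactly right: the $\delta^{-2}$ contributions from $\{H_n(x)-H_n(x-y-1)\}^2$ and $-H_n^{\langle2\rangle}(x-y-1)$ cancel, the surviving simple pole $-2\{H_n(x)-H_{n-1}\}\delta^{-1}$ meets the vanishing factor $\binm{x-y-1+n}{n}=\delta/n+O(\delta^2)$, and the finite limit divided by $4$ gives the stated identity. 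The comparison: the paper's pre-regularization keeps everything a formal substitution (no limits, no poles), which is tidier and matches the style of the rest of the paper; your version is more self-contained, working directly from \eqref{equation-f} rather than from the massaged form of Theorem~\ref{thm-e}, at the cost of having to justify the $0\times\infty$ evaluation --- which you do, since both sides are rational in $y$ and the left side is regular at $y=x$, so the right side's singularity is removable and its limit is what you computed.
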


\begin{proof}
Theorem \ref{thm-e} can be written as
 \bnm
 &&\xqdn\qqdn\qdn\sum_{k=0}^{n}(-1)^k\binm{n}{k}\frac{\binm{2x+k}{k}\binm{2y+k}{k}}{\binm{2x+n+k}{k}\binm{2x-2y+k}{k}}
 \frac{1+2x+2k}{1+2x+n+k}H_{k}(x)\\
&&\xqdn\qqdn\qdn\:\:=\:\frac{1}{2(x-2y+n)}\frac{\binm{2x+n}{n}\binm{x-2y+n}{n}}{\binm{x+n}{n}\binm{2x-2y+n}{n}}
\bigg\{(x-2y)\big[H_n(x)-H_{n}(x-2y)\big]-\frac{n}{x-2y+n}\bigg\}.
 \enm
 Applying the derivative operator $\mathcal{D}_y$ to both sides of
 the last equation, we obtain
\bnm
 &&\xqdn\sum_{k=0}^{n}(-1)^k\binm{n}{k}\frac{\binm{2x+k}{k}\binm{2y+k}{k}}{\binm{2x+n+k}{k}\binm{2x-2y+k}{k}}
 \frac{1+2x+2k}{1+2x+n+k}H_{k}(x)\big\{H_k(2y)+H_k(2x-2y)\big\}\\
&&\xqdn\:\:=\:\frac{1}{2(x-2y+n)}\frac{\binm{2x+n}{n}\binm{x-2y+n}{n}}{\binm{x+n}{n}\binm{2x-2y+n}{n}}
\bigg\{(x-2y)A_n(x,y)-\frac{nB_n(x,y)}{x-2y+n}-\frac{2n}{(x-2y+n)^2}\bigg\},
 \enm
where the corresponding symbols stand for
 \bnm
&&\xqdn
A_n(x,y)=\big[H_n(x)-H_n(x-2y)\big]\big[H_n(2x-2y)-H_n(x-2y)\big]
-H_{n}^{\langle2\rangle}(x-2y),\\
&&\xqdn B_n(x,y)=H_n(x)+H_n(2x-2y)-2H_n(x-2y).
 \enm
Perform the replacement $y\to\frac{x}{2}$ in the last equation to
give Theorem \ref{thm-g}.
\end{proof}

Setting $x=p$ with $p\in \mathbb{N}_0$ in Theorem \ref{thm-g} and
utilizing \eqref{equation-e}, \eqref{harmonic-ee} and the relation
 \bnm
H_k^2(p)=\{H_{p+k}-H_p\}^2=H_{p+k}^2-2H_pH_{p+k}+H_p^2,
 \enm
 we get the summation formula on harmonic numbers:
 \bmn\label{harmonic-gg}
\:\sum_{k=0}^{n}(-1)^k\binm{n}{k}\frac{\binm{2p+k}{k}}{\binm{2p+n+k}{k}}
 \frac{1+2p+2k}{1+2p+n+k}H_{p+k}^2
=\frac{1}{2n}\frac{\binm{2p+n}{n}}{\binm{p+n}{n}^2}\big\{H_{n-1}-H_p-H_{p+n}\big\}.
 \emn

\begin{thm} \label{thm-h}
Let $x$ and $y$ be complex numbers. Then
 \bnm
 &&\xxqdn\sum_{k=0}^{n}(-1)^k\binm{n}{k}
 \frac{\binm{\frac{x}{2}+k}{k}^2\binm{x-\frac{1}{2}+k}{k}}
 {\binm{\frac{x-1}{2}+k}{k}^2\binm{x-\frac{1}{2}+n+k}{k}}
  \frac{1+2x+4k}{1+2x+2n+2k}H_{2k}^2(x)\\
&&\xxqdn\:\:=\:\frac{1}{4}\frac{\binm{x-\frac{1}{2}+n}{n}\binm{-\frac{3}{2}+n}{n}}
{\binm{\frac{x-1}{2}+n}{n}^2}
\big\{\big[H_n(\tfrac{x-1}{2})-H_n(-\tfrac{3}{2})\big]^2-H_{n}^{\langle2\rangle}(-\tfrac{3}{2})\big\}.
 \enm
\end{thm}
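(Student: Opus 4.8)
The plan is to derive Theorem \ref{thm-h} from Theorem \ref{thm-f} by one further differentiation in the free parameter $y$, exactly as Theorem \ref{thm-g} was obtained from Theorem \ref{thm-e}. First I would apply the derivative operator $\mathcal{D}_y$ to both sides of Theorem \ref{thm-f}. On the left-hand side only the two $y$-dependent binomials respond: by $\mathcal{D}_y\binm{y+k}{k}=\binm{y+k}{k}H_k(y)$ the numerator contributes a factor $H_k(y)$, and since $\binm{x-y-\frac{1}{2}+k}{k}$ sits in the denominator its logarithmic derivative contributes $+H_k(x-y-\frac{1}{2})$. As $H_{2k}(x)$, the factor $\frac{1+2x+4k}{1+2x+2n+2k}$, and the purely $x$-dependent binomials are all inert under $\mathcal{D}_y$, the net effect is to insert $\{H_k(y)+H_k(x-y-\frac{1}{2})\}$ alongside the existing $H_{2k}(x)$.

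Next I would differentiate the right-hand side of Theorem \ref{thm-f} by the product rule, treating it as the binomial ratio $\binm{\frac{x-3}{2}-y+n}{n}/\binm{x-y-\frac{1}{2}+n}{n}$ times the bracket $\{H_n(\tfrac{x-1}{2})-H_n(\tfrac{x-3}{2}-y)\}$. The logarithmic derivative of the ratio produces $\{-H_n(\tfrac{x-3}{2}-y)+H_n(x-y-\tfrac{1}{2})\}$, while differentiating the bracket produces $-H_n^{\langle2\rangle}(\tfrac{x-3}{2}-y)$ through $\mathcal{D}_xH_n(x)=-H_n^{\langle2\rangle}(x)$. The differentiated right-hand side therefore has the two-term shape (a product of two harmonic differences) minus (a single second-order harmonic number).

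I would then set $y=\frac{x}{2}$. On the left-hand side $\binm{y+k}{k}$ merges with $\binm{\frac{x}{2}+k}{k}$ to give $\binm{\frac{x}{2}+k}{k}^2$ and $\binm{x-y-\frac{1}{2}+k}{k}$ merges with $\binm{\frac{x-1}{2}+k}{k}$ to give its square, while the bisection relation \eqref{relation} turns the inserted factor into $H_k(\tfrac{x}{2})+H_k(\tfrac{x-1}{2})=2H_{2k}(x)$; hence the left-hand side equals $2\sum\cdots H_{2k}^2(x)$. On the right-hand side the substitution forces $\tfrac{x-3}{2}-y=-\tfrac{3}{2}$ and $x-y-\tfrac{1}{2}=\tfrac{x-1}{2}$, so the product of harmonic differences collapses to the perfect square $\{H_n(\tfrac{x-1}{2})-H_n(-\tfrac{3}{2})\}^2$, the correction becomes $-H_n^{\langle2\rangle}(-\tfrac{3}{2})$, and the binomial ratio becomes $\binm{-\frac{3}{2}+n}{n}/\binm{\frac{x-1}{2}+n}{n}$. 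Dividing through by $2$ converts the prefactor $\frac{1}{2}$ into the claimed $\frac{1}{4}$ and isolates $\sum\cdots H_{2k}^2(x)$, yielding Theorem \ref{thm-h}.

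The main obstacle is the right-hand side bookkeeping: one must verify that after $y=\frac{x}{2}$ the two harmonic differences emitted by the product rule become identical, both equal to $H_n(\tfrac{x-1}{2})-H_n(-\tfrac{3}{2})$, so that their product is a bona fide square --- this works because $x-y-\tfrac{1}{2}$ collapses to $\tfrac{x-1}{2}$ and $\tfrac{x-3}{2}-y$ collapses to $-\tfrac{3}{2}$ --- while the derivative of the lone bracket simultaneously supplies exactly the $-H_n^{\langle2\rangle}(-\tfrac{3}{2})$ term. The left-hand side presents no real difficulty beyond invoking \eqref{relation} to recognise the inserted factor as $2H_{2k}(x)$ rather than two separate lower-order harmonic numbers.
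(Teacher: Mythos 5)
Your proof is correct and is essentially the paper's own argument: both obtain Theorem \ref{thm-h} by applying $\mathcal{D}_y$ a second time to the differentiated Dougall identity and then performing the bisection substitution together with \eqref{relation}, arriving at the same square $\big[H_n(\tfrac{x-1}{2})-H_n(-\tfrac{3}{2})\big]^2$ and the same correction term $-H_{n}^{\langle2\rangle}(-\tfrac{3}{2})$. The only difference is ordering --- the paper applies $\mathcal{D}_y$ to \eqref{equation-f} with all three parameters free and then substitutes $x\to x-\tfrac{1}{2}$, $y\to\tfrac{x}{2}$, $z\to\tfrac{x}{2}$ at the end, whereas you differentiate the already-specialized Theorem \ref{thm-f} and then set $y=\tfrac{x}{2}$; since the specialization of $x$ and $z$ does not involve $y$, the two operations commute and the computations coincide.
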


\begin{proof}
 Applying the derivative operator $\mathcal{D}_y$ to both sides of
 \eqref{equation-f}, we gain
 \bnm
  &&\xxqdn\xqdn\sum_{k=0}^{n}(-1)^k\binm{n}{k}\frac{\binm{x+k}{k}\binm{y+k}{k}\binm{z+k}{k}}
  {\binm{x+n+k}{k}\binm{x-y+k}{k}\binm{x-z+k}{k}}\frac{1+x+2k}{1+x+n+k}
 \\ &&\xxqdn\xqdn\:\,\times\:\:\big\{H_k(y)+H_k(x-y)\big\}\big\{H_k(z)+H_k(x-z)\big\}
 \\&&\xxqdn\xqdn\:\:=
 \frac{\binm{x+n}{n}\binm{x-y-z-1+n}{n}}{\binm{x-y+n}{n}\binm{x-z+n}{n}}
 \big\{C_n(x,y)-H_{n}^{\langle2\rangle}(x-y-z-1)\big\},
 \enm
where
$C_n(x,y)=\big[H_n(x-y)-H_n(x-y-z-1)\big]\big[H_n(x-z)-H_n(x-y-z-1)\big]$.
Employ the substitutions $x\to x-\frac{1}{2}$, $y\to\frac{x}{2}$,
$z\to\frac{x}{2}$ in the last equation to achieve
 \bnm
 &&\xxqdn\xqdn\sum_{k=0}^{n}(-1)^k\binm{n}{k}
 \frac{\binm{\frac{x}{2}+k}{k}^2\binm{x-\frac{1}{2}+k}{k}}
 {\binm{\frac{x-1}{2}+k}{k}^2\binm{x-\frac{1}{2}+n+k}{k}}
  \frac{1+2x+4k}{1+2x+2n+2k}\big\{H_{k}(\tfrac{x}{2})+H_{k}(\tfrac{x-1}{2})\big\}^2\\
&&\xxqdn\xqdn\:\:=\:\frac{\binm{x-\frac{1}{2}+n}{n}\binm{-\frac{3}{2}+n}{n}}
{\binm{\frac{x-1}{2}+n}{n}^2}
\big\{\big[H_n(\tfrac{x-1}{2})-H_n(-\tfrac{3}{2})\big]^2-H_{n}^{\langle2\rangle}(-\tfrac{3}{2})\big\}.
 \enm
According to \eqref{relation}, the last equation can be expressed as
Theorem \ref{thm-h} to complete the proof.
\end{proof}

Taking $x=p$ with $p\in \mathbb{N}_0$ in Theorem \ref{thm-h} and
using \eqref{equation-e}, \eqref{harmonic-ff} and the relation
 \bnm
H_{2k}^2(p)=\{H_{p+2k}-H_p\}^2=H_{p+2k}^2-2H_pH_{p+2k}+H_p^2,
 \enm
 we attain the summation formula on harmonic numbers and generalized harmonic numbers:
\bmn\label{harmonic-hh}
 &&\xxqdn\sum_{k=0}^{n}(-1)^k\binm{n}{k}
 \frac{\binm{\frac{p}{2}+k}{k}^2\binm{p-\frac{1}{2}+k}{k}}
 {\binm{\frac{p-1}{2}+k}{k}^2\binm{p-\frac{1}{2}+n+k}{k}}
  \frac{1+2p+4k}{1+2p+2n+2k}H_{p+2k}^2
  \nnm\\\nnm
&&\xxqdn\:\:=\:\frac{1}{4}\frac{\binm{p-\frac{1}{2}+n}{n}\binm{-\frac{3}{2}+n}{n}}
{\binm{\frac{p-1}{2}+n}{n}^2}
\big\{\big[H_n(\tfrac{p-1}{2})-H_n(-\tfrac{3}{2})\big]^2-H_{n}^{\langle2\rangle}(-\tfrac{3}{2})\\
&&\qquad\qquad\qquad\:\:\:
+\:4H_p\big[H_p+H_n(\tfrac{p-1}{2})-H_n(-\tfrac{3}{2})\big]\big\}.
 \emn
With the change of the parameters $p$ and $q$, \eqref{harmonic-aa},
\eqref{harmonic-bb}, \eqref{harmonic-cc}, \eqref{harmonic-dd} and
 \eqref{harmonic-ee}-\eqref{harmonic-hh}
 can create a lot of concrete harmonic number identities. The
 corresponding results will not be displayed here.

 \textbf{Acknowledgments}

The work is supported by the National Natural Science Foundation of
China (No. 11301120).



\end{document}